\newtheorem{theorem}{Theorem}[section]
\newtheorem{lemma}[theorem]{Lemma}
\theoremstyle{definition}
\theoremstyle{remark}
\numberwithin{equation}{section}
\newcommand{\ud}{\mathrm{d}}
\newcommand{\w}[1]{\widehat{#1}}
\begin{document}

\title{Critical case stochastic phylogenetic tree model via the Laplace transform}
\author{Krzysztof Bartoszek}
\address{Krzysztof Bartoszek \\ Chalmers University of Technology and the University of Gothenburg (corresponding author)}
\curraddr{Mathematical Sciences, Chalmers University of Technology and the University of Gothenburg 412 96 G\"oteborg, Sweden. }
\email{krzbar@chalmers.se}

\author{Micha\l\ Krzemi\'nski}
\address{Micha\l\ Krzemi\'nski \\ Gda\'nsk University of Technology, Polish Academy of Sciences}
\curraddr{Department of Probability Theory and Biomathematics, Faculty of Applied Mathematics and Technical Physics, Gda\'nsk University of Technology, 80-233 Gda\'nsk, Poland Institute of Mathematics, Polish Academy of Sciences, 00-956 Warszawa, Poland.}
\email{ mkrzeminski@mif.pg.gda.pl}

\subjclass[2000]{Primary 60K35 Secondary 92D15}

\keywords{Phylogenetic tree; stochastic model; Tauberian theory}

\begin{abstract}
Birth--and--death models are now a common mathematical tool to
describe branching patterns observed in real--world phylogenetic
trees. Liggett and Schinazi (2009) is one such example. The authors
propose a simple birth--and--death model that is compatible
with phylogenetic trees of both influenza and HIV, depending on
the birth rate parameter. An interesting special case of this model is
the critical case where the birth rate equals the death rate. 
This is a non--trivial situation and to study its asymptotic
behaviour we employed the Laplace transform.
With this we correct the proof of Liggett and Schinazi (2009)
in the critical case.
\end{abstract}

\maketitle

\section{Introduction}
Different viral types have phylogenetic trees exhibiting different branching properties,
with influenza and HIV being two extreme examples. In the influenza tree a single type
dominates for a long time with other types dying out quickly until suddenly a
new type completely takes over and the old type dies out. The HIV phylogeny
is the complete opposite, with a large number of co--existing types.

In \cite{TLigRSch2009} a stochastic model is described and depending on the choice
of parameters it can exhibit both types of dynamics. We briefly describe the model
after \cite{TLigRSch2009}. We only keep track of the number of different
viral types at each time point $t$. Let $N(t)$ denote the number
of distinct viral types at time $t$. In the nomenclature of
phylogenetics $N(t)$ counts the number of different
species alive at time $t$.
At each time point the birth rate
is $\lambda N(t)$ and the death rate is $N(t)$. If there is only one type
alive then it cannot die.
Clearly $N(t)$ is a Markov chain with discrete state space and continuous time.
Each virus type is described by a fitness value
that is randomly chosen at its birth. If a death event occurs
the type with smallest fitness dies. This means that only the fitness
ranks matter and so the exact distribution of a virus' fitness
will not play a role.

The main result of \cite{TLigRSch2009} is the asymptotic behaviour
of the dominating type, whether it is expected to remain the same for
long stretches of time or change often. This is summarized in Theorem 1 \cite{TLigRSch2009},
\begin{theorem}[Theorem 1, \cite{TLigRSch2009}]
Take $\alpha \in (0,1)$. If $\lambda \le 1$ then
\begin{displaymath}
\lim \limits_{t \rightarrow \infty} \mathrm{P}(maximal~types~at~times~\alpha t~and~t~are~the~same) = \alpha,
\end{displaymath}
while if $\lambda > 1$ then this limit is $0$.
\end{theorem}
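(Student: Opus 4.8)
The plan is first to reformulate the event probabilistically. Since a death always removes the currently alive type of \emph{smallest} fitness, the type of largest fitness is never killed; consequently the maximal type alive at any time $s$ is exactly the type of largest fitness among all types ever created in $[0,s]$. Let $M(s)$ denote the number of types created up to time $s$ (the initial types together with all births). Because fitnesses are i.i.d.\ and continuous, and because the law of $N(\cdot)$---and hence of the creation process $M(\cdot)$---depends on the fitnesses only through the rule ``remove the minimum,'' the counts $M(\cdot)$ are independent of the ranking of the assigned fitnesses. Conditioning on $M(\alpha t)$ and $M(t)$, the overall largest fitness among the first $M(t)$ created types is equally likely to be any one of them, so it was created by time $\alpha t$ with conditional probability $M(\alpha t)/M(t)$. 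Taking expectations, the quantity to analyse is
\begin{equation}
p(t) = \mathrm{E}\!\left[\frac{M(\alpha t)}{M(t)}\right],
\end{equation}
where $M(t) = M(0) + \int_0^t \lambda N(s)\,\ud s$ up to a mean--zero martingale, since births have stochastic intensity $\lambda N(\cdot)$.

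\textbf{The non--critical regimes.} For $\lambda<1$ the chain $N(\cdot)$ is positive recurrent, so by the ergodic theorem $M(t)/t$ converges almost surely to a positive constant; hence $M(\alpha t)/M(t)\to\alpha$ almost surely, and as the ratio is bounded by $1$, bounded convergence gives $p(t)\to\alpha$. For $\lambda>1$ the process is supercritical and $M(t)$ grows like $e^{ct}$ with $c>0$, so $M(\alpha t)/M(t)\to 0$ and $p(t)\to 0$. Both are routine, and all the difficulty is concentrated in $\lambda=1$.

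\textbf{The critical case via the Laplace transform.} Here $N(\cdot)$ is null recurrent, kept from extinction only by the reflecting rule at $1$, and this is where the real work lies and where the original argument must be repaired. The crucial subtlety is that $p(t)$ is the expectation of a \emph{ratio}, which must be treated directly: replacing it by the ratio of expectations $\mathrm{E}[M(\alpha t)]/\mathrm{E}[M(t)]$ gives the wrong power. Indeed, from the generator one finds $\tfrac{\ud}{\ud t}\mathrm{E}[N(t)] = (\lambda-1)\mathrm{E}[N(t)] + \mathrm{P}(N(t)=1) = \mathrm{P}(N(t)=1)$ in the critical case, and $\mathrm{P}(N(t)=1)\sim c\,t^{-1/2}$, so $\mathrm{E}[N(t)]$ is regularly varying of index $1/2$, $\mathrm{E}[M(t)]$ of index $3/2$, and the ratio of means tends to $\alpha^{3/2}\neq\alpha$. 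To access the true expected ratio I would instead write
\begin{equation}
\frac{1}{M(t)}=\int_0^\infty e^{-u M(t)}\,\ud u, \qquad\text{so}\qquad p(t)=\int_0^\infty \mathrm{E}\!\left[M(\alpha t)\,e^{-u M(t)}\right]\ud u,
\end{equation}
and use the Markov property at time $\alpha t$ to factor the joint transform of $(M(\alpha t),\,M(t)-M(\alpha t))$ through a generating/Feynman--Kac equation for the birth--counting process, extracting its $t\to\infty$ behaviour from its behaviour as the Laplace variable tends to $0$ by a Tauberian theorem---the natural device for the heavy--tailed excursion structure created by null recurrence.

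\textbf{The main obstacle.} The hardest step is the Tauberian analysis showing that the \emph{expected ratio} converges to $\alpha$ rather than to another power of $\alpha$. Morally this holds because, on the regeneration (excursion) structure of the null--recurrent chain, $M(\cdot)$ rescales to a self--similar increasing process whose increments carry enough exchangeability to force $\mathrm{E}[\mathcal{M}(\alpha)/\mathcal{M}(1)]=\alpha$---the same mechanism by which, for i.i.d.\ summands, $\mathrm{E}[(X_1+\dots+X_m)/(X_1+\dots+X_n)]=m/n$ by symmetry. Turning this heuristic into a proof, namely controlling the joint Laplace transform uniformly enough to pass to the limit inside the $u$--integral and identifying the limiting constant as exactly $\alpha$, is the crux, and is precisely the point where careful use of the Laplace transform replaces the flawed step in the original proof.
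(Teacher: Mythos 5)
Your reduction to $p(t)=\mathrm{E}[M(\alpha t)/M(t)]$ via the immortality of the record type and the independence of the fitness ranks from the counting process is correct, and it is exactly the reduction of Liggett and Schinazi that the paper takes for granted; your treatment of $\lambda<1$ and $\lambda>1$ is also fine. The gap is that the critical case --- the only case this paper is about --- is not proved: everything after the identity $1/M(t)=\int_0^\infty e^{-uM(t)}\,\ud u$ is a plan, and you yourself flag the decisive limit as ``the crux'' without supplying an argument. The joint transform $\mathrm{E}[M(\alpha t)e^{-uM(t)}]$ does not factor usefully (the increment $M(t)-M(\alpha t)$ is independent of $M(\alpha t)$ only conditionally on $N(\alpha t)$, which is itself diverging in the null recurrent case), and there is no indication that a Tauberian theorem in the auxiliary variable $u$ closes the loop. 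In your last paragraph you correctly guess the true mechanism --- exchangeability forcing $\mathrm{E}[(B_1+\dots+B_m)/(B_1+\dots+B_n)]=m/n$ --- but you do not identify where the exchangeable increments come from or what quantity actually has to be computed.

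The paper's route makes this concrete. The chain regenerates at each visit to state $1$; the $i$th excursion contributes an i.i.d.\ number of births, so exchangeability delivers the ratio $m/n$ once one knows that the number of completed excursions by time $t$ is asymptotically \emph{deterministic}. That is the content of Lemma 3 of Liggett--Schinazi, $T_n/(n\log n)\to 1$ in probability, which in turn hinges on the tail of the return time $H$ from state $2$ to state $1$. The entire point of this paper is that $F(t)=\mathrm{P}(H\le t)$ is \emph{not} $t/(1+t)$ as claimed in the original proof (the reflecting rule at $1$ shifts all the transition rates), but instead solves the renewal equation (\ref{EqRenewal}); the Laplace transform is applied to that renewal equation, and Tauberian theorems yield $t(1-F(t))\to 1$ together with the truncated moment asymptotics needed for the weak law. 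Your proposal contains none of this. A smaller point: your heuristic exponents are off --- for this chain $\int_0^t\mathrm{P}(N(s)=1)\,\ud s\sim t/\log t$, since the truncated mean return time grows like $\log t$ (the $t^{-1/2}$ decay belongs to a constant-rate walk, not to one whose rates grow linearly in $N$), so the ratio of means tends to $\alpha^2$ rather than $\alpha^{3/2}$; the qualitative moral that the mean of the ratio differs from the ratio of the means survives, but it should be drawn from the excursion structure rather than from a branching-process return probability.
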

The proof of this theorem is based on considering successive visits to the state $1$,
in particular denote $\tau_{1}, \tau_{2}, \ldots$ to be the (random) times between
visits of the chain to $N(t)=1$ and $T_{n}:=\tau_{1}+\ldots+\tau_{n}$.
In \cite{TLigRSch2009} the latter random variable is represented as
\begin{displaymath}
T_{n} = \sum_{i=1}^{n} X_{i} + \sum_{i=1}^{n} H_{i},
\end{displaymath}
where $X_{i}$ are independent mean $1$ exponential random variables and $H_{i}$ are the
(independent) hitting times of the state $1$ conditional on starting in state $2$.
Descriptively $H_{i}$ is the $i$th return from state $2$ to the state of one virus type alive,
since from $1$ the chain has to jump to two types. The Markovian nature
of the process ensures that the $H_{i}$s are independent and identically distributed for distinct $i$s.
In the proof of Theorem 1 it is stated that the cumulative distribution function
of $H_{i}$, $F(t)$, satisfies,
\begin{equation}\label{eqLigIntFt}
\int \limits_{0}^{F(t)} \frac{1}{1+s^{2}-2s} \ud s = t,
\end{equation}
solved uniquely for,
\begin{equation}\label{eqLigFt}
F(t) = \frac{t}{1+t}.
\end{equation}
This gives the asymptotic behaviour (Lemma $3$ \cite{TLigRSch2009}),
\begin{equation}\label{eqLigNt}
\lim_{n \rightarrow \infty} \frac{T_{n}}{n\log(n)}= 1 ~~~~ \mathrm{in~probability},
\end{equation}
from which the result of Theorem 1 is derived when $\lambda=1$.

However Eq. (\ref{eqLigIntFt}) does not take into account
that this model differs from a classical birth--death model where $0$ is the
absorbing state. We illustrate this in Fig. \ref{fgBD}. We can easily
re--numerate the state values, but the intensity values will differ between the two models.
\begin{figure}
\begin{center}
\includegraphics{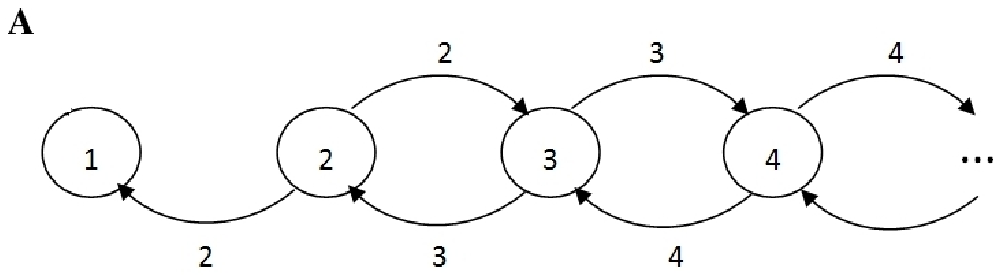} \\
\includegraphics{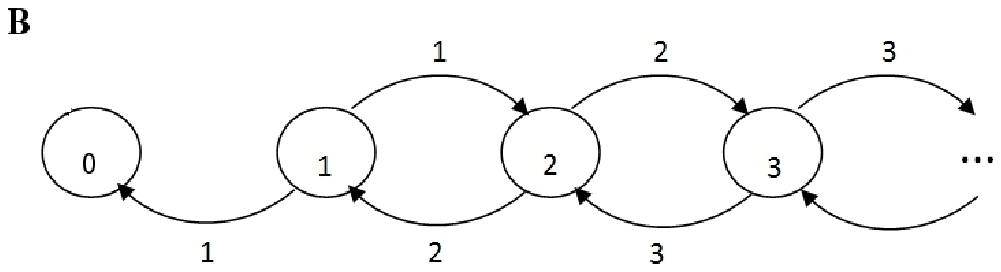}
\caption{A: Depiction of the Markov chain model described in \cite{TLigRSch2009}. \mbox{B: Depiction} of a classical
Markov chain model for which Eq. (\ref{eqLigIntFt}) would be correct. The numbers inside the circles
are the states (counting the number of virus type) and the numbers above
and below the arrows are the birth and death rates respectively of the state
from which they come out.}\label{fgBD}
\end{center}
\end{figure}
Correcting for this difference in intensity values one
will still get the same asymptotics as in Eq. (\ref{eqLigNt})
and hence the same result as in \mbox{Theorem 1}
but with a more complicated proof. Below we present a correct
proof of Lemma $3$ \cite{TLigRSch2009}
in the case of $\lambda=1$,  based on Lemmas \ref{lemRenewal} and \ref{lemFtasympt}.
From this the statement of Theorem  $1$ of \cite{TLigRSch2009} follows.

\section{Auxiliary lemmas}
\begin{lemma}\label{lemRenewal}
Let $P_{1}(t)\equiv P(H_1 \leq t) \equiv F(t)$. Then $P_{1}(t)$ solves the renewal equation
\begin{equation}\label{EqRenewal}
P_{1}(t)  =  \frac{2t}{(1+t)^{3}} + \int_{0}^{t}\frac{2}{(1+(t-y))^{3}}P_{1}(y) \ud y.
\end{equation}
\end{lemma}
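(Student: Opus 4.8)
The plan is to exploit the fact that, up until its first visit to state $1$, the chain $N(t)$ coincides with a critical binary branching process in which every type independently splits at rate $1$ and dies at rate $1$. Started from the two types alive in state $2$, this is the sum $Z^{(1)}(t)+Z^{(2)}(t)$ of two independent such processes, each issued from a single individual, and the reflecting modification at state $1$ is irrelevant before the first hit of $1$; hence $H_1$ has the same law as the first passage of this free process from $2$ to $1$. I would first record the single--lineage generating function $\phi(x,t)=\mathrm{E}[x^{Z(t)}]=\frac{t-(t-1)x}{(1+t)-tx}$, obtained by solving the backward equation $\partial_t\phi=(1-\phi)^2$ with $\phi(x,0)=x$, and read off $q(t):=\mathrm{P}(Z(t)=0)=\frac{t}{1+t}$ and $\psi(t):=\mathrm{P}(Z(t)=1)=\frac{1}{(1+t)^2}$. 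It is exactly $q$ --- the single--lineage extinction probability, which solves $q'=(1-q)^2$ --- that \cite{TLigRSch2009} mistakes for the first--passage law $P_1$; keeping $q$ and $P_1$ distinct is the whole point.

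The next step is to identify the two functions in \eqref{EqRenewal}. Since the event that the free process sits at $1$ requires one lineage at $1$ and the other extinct, squaring $\phi$ and extracting the coefficient of $x$ gives $\mathrm{P}\!\left(Z^{(1)}(t)+Z^{(2)}(t)=1\right)=2\,\psi(t)\,q(t)=\frac{2t}{(1+t)^{3}}$, which is precisely the inhomogeneous term; the kernel in turn satisfies $\frac{2}{(1+t)^{3}}=-\psi'(t)$, with $\psi(0)=1$. The key probabilistic input is a strong Markov first--passage decomposition of $\{Z^{(1)}(t)+Z^{(2)}(t)=1\}$: to be in state $1$ at time $t$ the process must first reach $1$ at some time $s\le t$ and then, restarted as a single fresh lineage, return to $1$ over the remaining time $t-s$. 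Writing $p=P_1'$ for the density of $H_1$, this reads
\begin{equation*}
\frac{2t}{(1+t)^{3}}=\int_0^t \psi(t-s)\,p(s)\,\ud s .
\end{equation*}

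Finally I would substitute this expression for the inhomogeneous term into the right--hand side of \eqref{EqRenewal} and simplify. Using $\frac{2}{(1+(t-y))^{3}}=-\psi'(t-y)=\frac{\ud}{\ud y}\psi(t-y)$ and integrating by parts,
\begin{equation*}
\int_0^t \frac{2}{(1+(t-y))^{3}}\,P_1(y)\,\ud y=\psi(0)P_1(t)-\psi(t)P_1(0)-\int_0^t \psi(t-y)\,p(y)\,\ud y ,
\end{equation*}
so that $\psi(0)=1$ and $P_1(0)=0$ make the boundary term equal to $P_1(t)$, while the surviving integral cancels exactly against $\int_0^t\psi(t-s)\,p(s)\,\ud s$. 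Thus the right--hand side of \eqref{EqRenewal} collapses to $P_1(t)$, as claimed. Equivalently one may verify the identity on the Laplace side, where the two ingredients become $\widetilde g=\widehat{H_1}\,\widetilde\psi$ and $\widetilde f=1-s\widetilde\psi$, tying the argument to the transform methods used later.

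I expect the main obstacle to be conceptual rather than computational: recognising the critical--branching interpretation and, above all, identifying the inhomogeneous term of \eqref{EqRenewal} as the one--point probability $\mathrm{P}(Z^{(1)}(t)+Z^{(2)}(t)=1)$, together with a careful justification of the first--passage decomposition (that the first hit of $1$ is a genuine regeneration at which the remaining configuration is a single fresh lineage). Once these ingredients are in place, the passage to the stated renewal equation is a routine integration by parts.
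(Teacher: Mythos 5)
Your argument is correct, but it reaches \eqref{EqRenewal} by a genuinely different route than the paper. The paper works analytically: it writes down the Kolmogorov forward equations for the chain of panel A of Fig.~\ref{fgBD} with state $1$ made absorbing, passes to the generating function $P(s,t)=\sum_n P_n(t)s^n$, solves the resulting first--order PDE \eqref{partdiffeq} by the method of characteristics, and extracts the renewal equation by differentiating at $s=0$; the inhomogeneous term $\bigl(\tfrac{s-(s-1)t}{1-t(s-1)}\bigr)^2$ appearing there is exactly your $\phi(x,t)^2$, so your derivation is in effect the probabilistic reading of the paper's computation. What you do instead is recognise the chain, up to its first hit of $1$, as the total population of a critical binary branching process, quote the known single--lineage generating function, and obtain the identity $\frac{2t}{(1+t)^3}=\int_0^t\psi(t-s)\,p(s)\,\ud s$ from a strong Markov first--passage decomposition; an integration by parts then converts this into \eqref{EqRenewal}. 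Your version buys conceptual transparency --- it explains where the two kernels $\frac{2t}{(1+t)^3}$ and $\frac{2}{(1+t)^3}$ come from and makes vivid the distinction between the extinction probability $t/(1+t)$ (which \cite{TLigRSch2009} mistook for $F$) and the true first--passage law --- at the cost of two technical points you should make explicit: that $H_1$ is almost surely finite with an absolutely continuous distribution (or else work with the Stieltjes measure $\ud F$ rather than a density $p$), and that the regeneration at the first--passage time really leaves a single fresh lineage, which follows from the memorylessness of the exponential lifetimes. The paper's PDE derivation is more mechanical but self--contained, and it adapts directly to modified rate schemes, as the closing discussion of Eq.~\eqref{partdiffeqB} indicates.
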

\begin{proof}
In panel A of Fig. \ref{fgBD} we can see a representation of the studied Markov chain
on the state space $S=1,2,3,\ldots$. Due to the $H_{i}$s being independent for
different $i$s we can study the distribution of $H_{1}$ and treat $1$ as an absorbing
state. Let $P_{n}(t)$ denote the probability of being in state $n$ at time $t$,
when one starts in state $2$ at time $0$.
The system of differential equations describing the probabilities is,
\begin{equation}
\left\{\begin{array}{ll}\label{p123}
P'_1(t)&=2P_2(t)\\
P'_2(t)&=-4 P_2(t)+3P_3(t)\\
P'_n(t)&=-2nP_n(t)+(n+1)P_{n+1}(t)+(n-1)P_{n-1}(t),~~n \ge 3\\
\end{array}\right.
\end{equation}
with initial conditions,
\begin{equation}
P_1(0)=0, \qquad P_2(0)=1, \qquad P_3(0)=\ldots=P_n(0)=\ldots=0.
\end{equation}
Let $P(s,t)$ denote the generating function of the sequence $\{P_{n}(t)\}_{n=1}^{\infty}$,
i.e.
\begin{displaymath}
P(s,t)=\sum_{n=1}^{\infty}P_{n}(t)s^{n}.
\end{displaymath}
Taking first derivatives we get the partial differential equation,
\begin{equation}\label{partdiffeq}
\frac{\partial P(s,t)}{\partial t}=-(s-1)^2P_1(t)+(s-1)^2\frac{\partial P(s,t)}{\partial s},
\end{equation}
with initial conditions
\begin{equation}
\frac{\partial P(s,t)}{\partial s}\mid_{s=0\atop t=t}=P_1(t) \quad P(s,t)\mid_{s=s\atop t=0}=\sum_{n=1}^{\infty}P_n(0)s^n=s^2.
\end{equation}
Following \S$2.1$ \cite{Evans} and using the substitution $z(x)=P(h(s,x),t+x)$ with
\begin{displaymath}
h(s,x) :=  1 + \frac{1}{x+\frac{1}{s-1}} \quad s \neq 1,
\end{displaymath}
we arrive at,
\begin{displaymath}
P(s,t) - \left(\frac{s-(s-1)t}{1-t(s-1)}\right)^{2} = z(0)-z'(-t) = \int_{-t}^{0} z'(x) \ud x = 
\int_{0}^{t} \frac{-1}{\left(y-t+\frac{1}{s-1} \right)^{2}} P_{1}(y)  \ud y.
\end{displaymath}
Evaluating the derivative of both sides with 
respect to $s$ at $0$ we find that the function $P_{1}(t)$ must satisfy the following integral equation
(we can recognize it as a renewal equation),
\begin{equation}
\frac{\partial P}{\partial s}(0,t) = P_1(t)  =  \frac{2t}{(1+t)^{3}} + \int_{0}^{t}\frac{2}{(1+(t-y))^{3}}P_1(y) \ud y.
\end{equation}
\end{proof}

\begin{lemma}\label{lemFtasympt}
$F(t) \equiv P_{1}(t)$, the solution of the renewal equation (\ref{EqRenewal}),
has the following properties,
\begin{equation}\label{mnh}
\int_0^{h}tF'(t)\ud t \sim \log(h) ~~ \mathrm{as~} h \rightarrow \infty,
\end{equation}
\begin{equation}\label{snh}
\int_0^{h}t^2F'(t)\ud t \sim h ~~ \mathrm{as~} h \rightarrow \infty.
\end{equation}
\end{lemma}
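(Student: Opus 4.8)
The plan is to pass to the Laplace transform of the renewal equation~(\ref{EqRenewal}) and then read off the behaviour of the truncated moments from the behaviour of the transform near the origin through a Tauberian theorem, exactly as the paper's keywords suggest. Write~(\ref{EqRenewal}) as $P_{1} = g + k \ast P_{1}$ with kernel $k(t) = 2/(1+t)^{3}$ and forcing term $g(t) = 2t/(1+t)^{3} = t\,k(t)$. Denoting Laplace transforms by a hat, convolution turns this into $\w{P_{1}}(\sigma) = \w{g}(\sigma)/(1-\w{k}(\sigma))$, and since $f := F' = P_{1}'$ with $P_{1}(0)=0$ we obtain $\w{f}(\sigma) = \sigma\,\w{g}(\sigma)/(1-\w{k}(\sigma))$. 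The identity $g = t\,k$ gives $\w{g} = -\w{k}\,'$, so, setting $\phi := 1-\w{k}$ (whence $\phi' = \w{g}$), the transform collapses to the compact form $\w{f}(\sigma) = \sigma\,\phi'(\sigma)/\phi(\sigma)$. Because $F$ is a genuine distribution function we have $f \ge 0$, and the quantities in (\ref{mnh}) and (\ref{snh}) are the integrals $U_{j}(h) := \int_{0}^{h} t^{j} F'(t)\,\ud t$ of the non-negative functions $t^{j} f(t)$, whose Laplace transforms are $\int_{0}^{\infty} e^{-\sigma t} t^{j} f(t)\,\ud t = (-1)^{j}\w{f}^{(j)}(\sigma)$. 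These are the objects I would feed into the Tauberian machine.

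The heart of the argument, and the step I expect to be the main obstacle, is the local expansion of $\phi$ at the origin. One checks $\w{k}(0) = \int_{0}^{\infty} k = 1$ and $\int_{0}^{\infty} t\,k(t)\,\ud t = 1$, whereas $\int_{0}^{\infty} t^{2} k(t)\,\ud t = \infty$ since $k(t) \sim 2 t^{-3}$; it is precisely this infinite second moment (a regularly varying tail of index $-3$) that forces a logarithm into the expansion. Starting from $1 - \w{k}(\sigma) = \int_{0}^{\infty}(1-e^{-\sigma t})k(t)\,\ud t$, peeling off the first-moment term $\sigma$ and rescaling by $v = \sigma t$ reduces the remainder to $2\sigma^{2}\int_{0}^{\infty}(1-e^{-v}-v)(\sigma+v)^{-3}\,\ud v$, an integral that diverges logarithmically as $\sigma \to 0$ and whose regularised value is $\tfrac{1}{2}\log\sigma + O(1)$. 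This yields
\begin{equation}
\phi(\sigma) = 1 - \w{k}(\sigma) = \sigma + \sigma^{2}\log\sigma + O(\sigma^{2}), \qquad \sigma \to 0^{+},
\end{equation}
together with the companion estimates $\phi'(\sigma) = 1 + 2\sigma\log\sigma + O(\sigma)$ and $\phi''(\sigma) = -\w{k}\,''(\sigma) \sim 2\log\sigma$ (the latter because $t^{2} k(t) \sim 2/t$). Pinning down the coefficient of $\sigma^{2}\log\sigma$ exactly is what ultimately fixes the constant $1$, rather than some other value, in both (\ref{mnh}) and (\ref{snh}).

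Substituting these expansions into $\w{f} = \sigma\phi'/\phi$ and cancelling, $\sigma\phi'/\phi = (1 + 2\sigma\log\sigma + \cdots)/(1 + \sigma\log\sigma + \cdots)$, gives
\begin{equation}
\w{f}(\sigma) = 1 + \sigma\log\sigma + O(\sigma), \qquad \sigma \to 0^{+}.
\end{equation}
Differentiating the explicit quotient $\sigma\phi'/\phi$ (rather than the expansion itself) and inserting the estimates for $\phi, \phi', \phi''$, one finds that the leading $\sigma^{-1}$ singularities cancel, leaving $-\w{f}'(\sigma) \sim \log(1/\sigma)$ and, one order further, $\w{f}''(\sigma) \sim \sigma^{-1}$ as $\sigma \to 0^{+}$. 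This delicate cancellation of the singular terms is the technical price of working with derivatives of the transform, and it is where the computation must be carried out with the most care.

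Finally I would invoke Karamata's Tauberian theorem for each truncated moment, applied directly to the non-decreasing integrated quantities $U_{j}$. For (\ref{mnh}), the transform $-\w{f}'(\sigma) = \int_{0}^{\infty} e^{-\sigma t} t f(t)\,\ud t$ of the non-negative $tf$ satisfies $-\w{f}'(\sigma) \sim \log(1/\sigma)$, which is the case $\rho = 0$, $\ell(x) = \log x$, so the theorem returns $\int_{0}^{h} t F'(t)\,\ud t \sim \log h$. For (\ref{snh}), the transform $\w{f}''(\sigma) = \int_{0}^{\infty} e^{-\sigma t} t^{2} f(t)\,\ud t$ of the non-negative $t^{2} f$ satisfies $\w{f}''(\sigma) \sim \sigma^{-1}$, the case $\rho = 1$, $\ell \equiv 1$, whence $\int_{0}^{h} t^{2} F'(t)\,\ud t \sim h/\Gamma(2) = h$. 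Since the theorem is applied to the integrated quantities themselves, no monotone-density refinement is needed and both asymptotics follow at once.
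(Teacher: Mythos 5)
Your argument is correct and follows essentially the same route as the paper: take Laplace transforms of the renewal equation, extract the small-$s$ asymptotics $\w{tF'(t)}(s)=-\w{f}'(s)\sim\log(1/s)$ and $\w{t^{2}F'(t)}(s)=\w{f}''(s)\sim 1/s$ (with the logarithm coming from the divergent second moment of the kernel, i.e.\ the exponential-integral asymptotic), and conclude via Karamata's Tauberian theorem. Your only departure is cosmetic --- you differentiate the closed-form transform $\sigma\phi'/\phi$ instead of deriving separate convolution equations for $tF'$ and $t^{2}F'$ as the paper does, which is the same computation in different clothing.
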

\begin{proof}
The proof is based on Tauberian theory and we refer the reader to \cite{Feller,Korevaar} for details on this.
Another approach would be to study the asymptotic behaviour of $F'(t)$ by renewal theory results
(see e.g. \cite{PEmbEOme1984,TLig1989}). 
The Laplace transform of a density function $f(x)$, denoted $\w{f(x)}(s)$ is,
\begin{displaymath}
\w{f(x)}(s) = \int_0^\infty e^{-xs} f(x) \ud x.
\end{displaymath}
We will use the following theorem from \cite{Feller}, Theorem $2$ \S XIII.5,
\begin{theorem}[Theorem $2$ \S XIII.5, \cite{Feller}]\label{tauberian}
If $L$ is slowly varying at infinity and $0 \leq \rho < \infty$, then each of the relations
\begin{equation}
\w{f(t)}(s) = \int_0^\infty f(t) \exp(-st) \ud t \sim s^{- \rho} L(\frac1s), \qquad s \rightarrow 0,
\end{equation}
\begin{equation}
F(t) = \int_0^t f(u)\ud u \sim \frac1{\Gamma(\rho +1)} t^\rho L(t), \qquad t \rightarrow \infty
\end{equation}
implies the other.
\end{theorem}
$F(t)\equiv P_{1}(t)$ defined as the solution to the renewal equation (\ref{EqRenewal}) after
differentiating will satisfy
\begin{equation}\label{diffF}
F'(t) = \frac{2-4t}{(1+t)^{4}} + \int_{0}^{t}\frac{2}{(1+t-y)^{3}}F'(y) \ud y
\end{equation}
We calculate the Laplace transforms of $F'(t)$, $tF'(t)$ and $t^2F'(t)$,
\begin{align}\label{wdiffF}
\w{F'(t)}(s) &= \w{\frac{2-4t}{(1+t)^{4}}}(s) +\w{\frac{2}{(1+t)^{3}}}(s) \cdot \w{F'(t)}(s) \\\label{wtdiffF}
\w{tF'(t)}(s) &= \w{\frac{(2-4t)t}{(1+t)^{4}}}(s) + \w{\frac{2(t)}{(1+t)^{3}}}(s)\cdot \w{F'(t)}(s) + \w{\frac{2}{(1+t)^{3}}}(s)\cdot \w{tF'(t)}(s)\\\label{wttdiffF}
\w{t^2F'(t)}(s) &= \w{\frac{(2-4t)t^2}{(1+t)^{4}}}(s) + \w{\frac{2(t)^2}{(1+t)^{3}}}(s) \cdot \w{F'(t)}(s)+ \w{\frac{4t}{(1+t)^{3}}}(s)\cdot \w{tF'(t)}(s) + \w{\frac{2}{(1+t)^{3}}}(s) \cdot \w{t^2F'(t)}(s).
\end{align}
We are interested in the behaviour of the transforms as $s \rightarrow 0$, and for this
we will use the well known property (verifiable by the de L'H\^ospital rule) of the exponential integral,
\begin{displaymath}
\int_s^\infty \frac{\exp(-u)}u  \ud u \sim -\log(s),
\end{displaymath}
to arrive at,
\begin{align}
\w{tF'(t)}(s) = \cfrac{ \w{\frac{(2-4t)t}{(1+t)^{4}}}(s) + \w{\frac{2t}{(1+t)^{3}}}(s)\cdot \w{F'(t)}(s) }{ 1- \w{\frac{2}{(1+t)^{3}}}(s) }\sim \log(\frac1s),\qquad s \rightarrow 0\\
\w{t^2F'(t)}(s) = \cfrac{ \w{\frac{(2-4t)t^2}{(1+t)^{4}}}(s) + \w{\frac{2t^2}{(1+t)^{3}}}(s) \cdot \w{F'(t)}(s)+ \w{\frac{4t}{(1+t)^{3}}}(s)\cdot \w{tF'(t)}(s) }{ 1- \w{\frac{2}{(1+t)^{3}}}(s) } \sim \frac1s,\qquad s \rightarrow 0\\
\w{t(1-F(t))}(s) = \frac{ \w{\frac{t}{(1+t)^2}}(s)- \w{\frac{2t^2}{(1+t)^{3}}}(s) + \w{\frac{2t}{(1+t)^{3}}}(s)\cdot \w{(1-F(t))}(s)} {1-\w{\frac{2}{(1+t)^{3}}}(s)} \sim \frac1s\qquad s \rightarrow 0. \label{twF}
\end{align}
As  both the constant function $(s^{0})$ and $\log(1/s)$ are slowly varying functions for $s \rightarrow 0$
the Tauberian theorem allows us to conclude that,
\begin{align}
\int_0^{h} tF'(t) \ud t \sim \log(h)\\
\int_0^{h} t^2F'(t) \ud t \sim h. \label{eqt2F}
\end{align}
\end{proof}

\section{Proof of Lemma 3 \cite{TLigRSch2009}}
We will now use Lemmas \ref{lemRenewal} and \ref{lemFtasympt} to prove Lemma 3 from \cite{TLigRSch2009}.
Define as there,
\begin{displaymath}
m_{n} := \int_{0}^{\rho_{n}}tF'(t)\ud t \qquad \mathrm{and} \qquad s_{n} := \int_{0}^{\rho_{n}}t^{2}F'(t)\ud t,
\end{displaymath}
with $\rho_{n}:=n\sqrt{\log(n)}$.
By Lemma \ref{lemFtasympt} we know that,
\begin{displaymath}
\begin{array}{ccc}
m_{n} \sim \log (\rho_{n}) \sim \log(n) & \mathrm{and} & s_{n} \sim \rho_{n}.
\end{array}
\end{displaymath}
We now need to check how $n(1-F(\rho_{n}))$ behaves asymptotically. We do not know what $F(\rho_{n})$
is but using the Tauberian theorem and Eq. (\ref{twF})
from the proof of Lemma \ref{lemFtasympt} we get that,
\begin{equation}
\int_0^t u(1-F(u)) \ud u \sim t, \qquad t\rightarrow \infty.
\end{equation}
Therefore using integration by parts and Eq. (\ref{eqt2F}),
\begin{align}
1\sim \frac{\int_0^t u(1-F(u)) du}{t}&=\frac{\frac{t^2}2(1-F(t))}{t} +\frac{\int_0^t\frac{u^2}2F'(u)du}{t}, \qquad t\rightarrow \infty\notag\\
1\sim&\frac12(t(1-F(t))) +\frac12, \qquad t\rightarrow \infty\notag,
\end{align}
and so we arrive at
\begin{equation}
\lim \limits_{t\rightarrow \infty}t(1-F(t))\rightarrow 1.
\end{equation}
The rest of the proof is a direct repeat of the one in \cite{TLigRSch2009} 
and so we get (as in \cite{TLigRSch2009}) that $T_{n}/(n\log(n))$ tends to $1$
in probability implying Theorem 1 \cite{TLigRSch2009} for $\lambda=1$.

If we applied the same chain of reasoning to the model of panel B in Fig. \ref{fgBD}
starting off with the system of differential equations, the analogue of 
Eq. (\ref{partdiffeq}) would be a homogeneous partial differential equation,
\begin{equation}\label{partdiffeqB}
\frac{\partial P(s,t)}{\partial t}=(s-1)^2\frac{\partial P(s,t)}{\partial s},
\end{equation}
with initial conditions
\begin{equation}
\frac{\partial P(s,t)}{\partial s}\mid_{s=0\atop t=t}=P_1(t) \quad P(s,t)\mid_{s=s\atop t=0}=s,
\end{equation}
in agreement with $P_{1}(t)=t/(t+1)$.
It would therefore be an interesting problem to see what conditions are necessary
on the nonhomogeneous part of Eq. (\ref{partdiffeq}) 
to still get the same asymptotic behaviour of the Markov chain
and what underlying model properties do these conditions imply.

\section*{Acknowledgments}
We are grateful to Wojciech Bartoszek and Joachim Domsta for many helpful
comments, insights and suggestions.
K.B. was supported by the Centre for Theoretical Biology at the University of Gothenburg, 
Stiftelsen f\"or Vetenskaplig Forskning och Utbildning i Matematik
(Foundation for Scientific Research and Education in Mathematics), 
Knut and Alice Wallenbergs travel fund, Paul and Marie Berghaus fund, the Royal Swedish Academy of Sciences,
Wilhelm and Martina Lundgrens research fund
and \"Ostersj\"osamarbete scholarship from Svenska Institutet (00507/2012).


\begin{thebibliography}{99}
\normalsize

\bibitem{PEmbEOme1984}
{P. Embrechts, E. Omey,} {\em Functions of power series}, {Yokohama Math. J.} {32 (1984),} 77--88.

\bibitem{Evans}
{L.~C. Evans,} {\em Partial Differential Equations}, AMS, 1998.

\bibitem{Feller}
{W. Feller,} {\em An Introduction to Probability Theory and Its Applications vol 2}, Wiley, New York, 1971.

\bibitem{Korevaar}
{J. Korevaar,} {\em Tauberian Theory: a Century of Developments}, Springer 2004.

\bibitem{TLig1989}
{T.~M. Liggett,} {\em Total Positivity and Renewal Theory}, in Probablity, Statistics and Mathematics, Papers in Honor of Samuel Karlin, Academic Press,  1989.

\bibitem{TLigRSch2009}
{T.~M. Liggett, R.~B. Schinazi,} {\em A stochastic model for phylogenetic trees}, {J. Appl. Prob.} {46 (2009),} 601--607.

\end{thebibliography}
\end{document}